\theoremstyle{plain}
\newtheorem{thm}{Theorem}[section]
\newtheorem{lem}[thm]{Lemma}
\newtheorem{prop}[thm]{Proposition}
\begin{document}

\title{Homomorphism-homogeneous $L$-colored graphs}

\author[1]{David Hartman}
\author[1]{Jan Hubi\v{c}ka}
\author[2]{Dragan Ma\v{s}ulovi\'{c}}

\affil[1]{Computer Science Institute of Charles University in Prague, Malostransk\' e n\' am\v est\' i 25, 118~00, Prague 1, Czech Republic}
\affil[2]{Department of Mathematics and Informatics, Faculty of Science, University of Novi Sad, Trg D. Obradovi\'{c}a 4, 21000 Novi Sad, Serbia}

\date{}

\maketitle

\begin{abstract}
A relational structure is homomorphism-homogeneous ($HH$-homogeneous for short) if every homomorphism between finite induced substructures of the structure can be extended to a homomorphism over the whole domain of the structure. Similarly, a structure is monomorphism-homogeneous ($MH$-homogeneous for short) if every monomorphism between finite induced substructures of the structure can be extended to a homomorphism over the whole domain of the structure. In this paper we consider $L$-colored graphs, that is, undirected graphs without loops where sets of colors selected from $L$ are assigned to vertices and edges. A full classification of finite $MH$-homogeneous $L$-colored graphs where $L$ is a chain is provided, and we show that the classes $MH$ and $HH$ coincide. When $L$ is a diamond, that is, a set of pairwise incomparable elements enriched with a greatest and a least element, the situation turns out to be much more involved. We show that in the general case the classes $MH$ and $HH$ do not coincide.
\end{abstract}

\section{Introduction}\label{intro}

A {\em relational structure} $\mathcal{A}$ is a pair $(A,R_\mathcal{A})$, where $R_\mathcal{A}$ is a tuple $(R_\mathcal{A}^i:i\in I)$ of relations 
such that $R_\mathcal{A}^i\subseteq A^{\delta_i}$ (i.e. $R_\mathcal{A}^i$ is a $\delta_i$-ary
relation on $A$). The family $\Delta = (\delta_i: i\in I)$ is called the {\em
type} of $\mathcal{A}$. The type is usually fixed and understood from the context.
The underlying set $A$ is called the {\em domain} of $\mathcal{A}$.

Relational structures of type $(2)$ can be seen as directed graphs with loops.
We will also consider undirected graphs without loops as relational
structures of type $(2)$ with one symmetric and irreflexive binary relation.

For structures $\mathcal{A} = (A,R_{\mathcal{A}})$ and $\mathcal{B} = (B,R_{\mathcal{B}})$ a {\em homomorphism} $f : \mathcal{A} \rightarrow \mathcal{B}$ is a mapping $f : A \rightarrow B$ such that $(x_1,x_2, \ldots, x_{\delta_i}) \in R_{\mathcal{A}}^i$ implies $(f(x_1),$$f(x_2), \ldots, f(x_{\delta_i})) \in R_{\mathcal{B}}^i$ for each $i \in I$.  If $f$ is one-to-one then $f$ is called a {\em monomorphism}.  An {\em isomorphism} $g : \mathcal{A} \rightarrow \mathcal{B}$ is a bijective mapping $g : A \rightarrow B$ such that $(x_1,x_2, \ldots, x_{\delta_i}) \in R_{\mathcal{A}}^i \Leftrightarrow (g(x_1),$$g(x_2), \ldots, g(x_{\delta_i})) \in R_{\mathcal{B}}^i$ for each $i \in I$. 

An isomorphism from a structure to itself is called an {\em automorphism}. Similarly, an {\em endomorphism} is a homomorphism from a structure to itself. Throughout the paper a we write
\begin{equation*}
\displaystyle f = \begin{pmatrix}
  x_1 & x_2 & \ldots & x_n\\
  y_1 & y_2 & \ldots & y_n \end{pmatrix}
\end{equation*}
for a mapping $f : \{x_1,x_2, \ldots, x_n\} \rightarrow \{y_1,y_2, \ldots, y_n\}$ such that $f(x_i) = y_i$ for all $i \in \{1,2, \ldots, n\}$.

A structure $\mathcal{A}$ is called {\em ultrahomogeneous} if every isomorphism between two induced finite substructures of $\mathcal{A}$ can be extended to an automorphism of $\mathcal{A}$. There is a long-standing effort to classify all ultrahomogeneous relational structures since the work of Fraiss\'{e}~\cite{Fraisse:1953} (see, for example \cite{Cherlin:1998, Schmerl:1979}). 

In this paper we will use the classification of finite undirected graphs without
loops provided by Gardiner in \cite{Gardiner:1976}. He has shown that a finite
graph is ultrahomogeneous if and only if it is isomorphic to one of the following graphs:
\begin{enumerate}
\item[1.] a disjoint union of complete graphs all of the same size, $\bigcup_{i=1}^k K_n$,
\item[2.] multipartite graphs $K_{n_1,n_2, \ldots, n_k}$ with $n_i=n_j= \ldots = n_k$,
\item[3.] the $5$-cycle $C_5$,
\item[4.] the line graph $L(K_{3,3})$.
\end{enumerate}

While this class of finite ultrahomogeneous graphs is important in our case we will refer to it as the {\em Gardiner's class} or simply as {\em Gardiner graphs}.


Quite recently, Cameron and Ne\v{s}et\v{r}il introduced the following variant of homogeneity~\cite{CameronNesetril:2006}. A structure $\mathcal{A}$ is called {\em homomorphism-homogeneous} ({\em $HH$-homogeneous} for short) if every homomorphism between finite induced substructures of $\mathcal{A}$ can be extended to an endomorphism of $\mathcal{A}$.  This notion has motivated a new classification programme.
Finite $HH$-homogeneous undirected graphs are classified as complete and null graphs~\cite{CameronNesetril:2006}. Other classes of structures where $HH$-homogeneous structures have been fully classified are, for example, partially ordered sets in~\cite{Masulovic:2007} or in~\cite{CameronLockett:2010} and finite tournaments~\cite{IlicEtAl:2008}. 

\begin{figure}[ht]
\begin{center}
\includegraphics[scale=1.0]{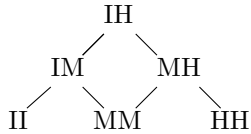}
\caption{The hierarchy of morphism extension classes for a general relational structure.}
\label{fig:morphism_clasess_categories}
\end{center}
\end{figure}

Several other variants of homogeneity are also proposed in~\cite{CameronNesetril:2006}.  For these we follow the notation used in~\cite{CameronNesetril:2006, CameronLockett:2010}. We say that a structure $\mathcal{A}$ belongs to a class $XY$ if every $x$-morphism from a finite substructure of $\mathcal{A}$ into $\mathcal{A}$ extends to a $y$-morphism from $\mathcal{A}$ to $\mathcal{A}$ where pairs $(X,x)$ and $(Y,y)$ can be $(I,iso)$, $(M,mono)$ and $(H,homo)$. 

Many of these classes are related. For example $MH$ is a subclass of $IH$.
The obvious inclusions between the morphism extension classes are depicted in Figure~\ref{fig:morphism_clasess_categories}.
Note that, for simplicity, we omit the inclusions implied by transitivity in
all diagrams.

\begin{figure}[ht]
\begin{center}
\subfigure[Countably infinite graphs]{\includegraphics[scale=1.0]{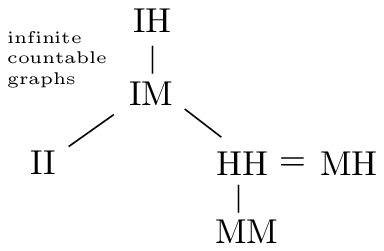}} \qquad \qquad
\subfigure[Finite graphs]{\includegraphics[scale=1.0]{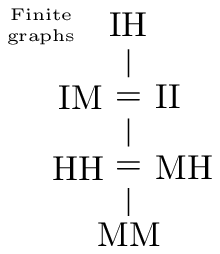}}
\caption{The hierarchy of morphism extension classes for graphs~\cite{RusinovSchweitzer:2010}.}
\label{fig:mclasses_graphs}
\end{center}
\end{figure}

For specific types of relational structures, some classes are known to be equivalent (such as $HH$ and $MH$ for
graphs~\cite{CameronNesetril:2006,RusinovSchweitzer:2010}). This leads to simplified inclusion diagrams.
Figure~\ref{fig:mclasses_graphs} depicts the hierarchy for finite and infinite countable graphs~\cite{RusinovSchweitzer:2010}, and Figure~\ref{fig:mclasses_posets} the hierarchy for partially ordered sets~\cite{CameronLockett:2010}.

\begin{figure}[ht]
\begin{center}
\subfigure[Countably infinite posets]{\includegraphics[scale=1.0]{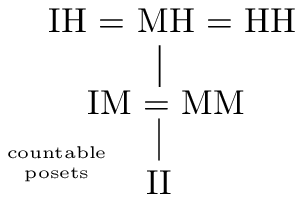}} \qquad \qquad
\subfigure[Finite posets]{\includegraphics[scale=1.0]{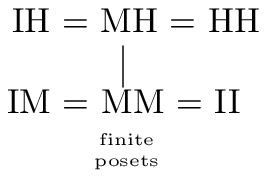}}
\caption{Hierarchy of morphisms extension classes for partially ordered sets~\cite{CameronLockett:2010}.}
\label{fig:mclasses_posets}
\end{center}
\end{figure}

The main question of the classification programme is to give a catalogue of structures belonging to a given class. The full classification of any of the classes is far from complete. The class $II$ is the most extensively studied one, while the class $HH$ and other variants are less explored. In Section~\ref{s:multicolored_graphs} we introduce a rather general notion of $L$-colored graphs where $L$ is a partially ordered set. (We think of $L$ as a poset of admissible combinations of colors ordered by inclusion.) In Sections \ref{s:chains} and \ref{s:diamonds} we provide classifications of finite $MH$-homogeneous $L$-colored graphs where $L$ is a chain or a diamond. In all the existing classification results, the classes $HH$ and $MH$ coincide. This leads to the question whether there is a structure that is $MH$ but not $HH$.  We give a positive answer to this question in Section~\ref{s:diamonds}. A few more types of structures where the classes $MH$ and $HH$ do not coincide are given in Section~\ref{s:mor-ext-classes}.

\section{Multicolored graphs}
\label{s:multicolored_graphs}

Let $G = (V,\mathbf{E})$ with $\mathbf{E} = (E_1,E_2,\ldots,E_m)$ be a relational structure with a collection $\mathbf{E}$ of symmetric irreflexive binary relations. This structure is called a {\em multicolored graph}. In case $m = 2$ we say that $G$ is a {\em bicolored graph}, or shortly a {\em bigraph}. Finite $HH$-homogeneous bigraphs have been classified in~\cite{HartmanMasulovic:2011}.

In this paper we propose the study of a related but more general notion which yields a clearer, unifying presentation.
Let $L$ be a partially ordered set with the ordering relation $\preceq$, with the least element~$0$ and the greatest element~$1$. An {\em $L$-colored graph} is an ordered triple $(V, \chi', \chi'')$ such that $V$ is a nonempty set, $\chi' : V \to L$ is an arbitrary function and $\chi'' : V^2 \to L$ is a function satisfying the following:
\begin{enumerate}
\item $\chi''(x,x) = 0$; and
\item $\chi''(x,y) = \chi''(y,x)$ whenever $x \neq y$.
\end{enumerate}
The function $\chi'$ provides colors of vertices of $G$, while $\chi''$ provides colors of edges of $G$. The two restrictions that we have imposed on $\chi''$ mean that $G$ is without loops and undirected.

A multicolored graph $(V, (E_1, \ldots, E_m))$ as introduced in \cite{HartmanMasulovic:2011} can be thought of as an $L$-colored graph $(V, \chi', \chi'')$ where $L =  \mathcal{P}(\{1,2,\ldots,m\})$ with set-inclusion as the ordering relation, $\chi'(x) = \emptyset$ (no colors are assigned to vertices), and $\chi''(x,y) = \{j:\{x,y\} \in E_j\}$.

Consequently, the intuition that we have is that $\chi'(x) = 0$ means that there are no colors assigned to $x$, and $\chi'(x) = 1$ means that the vertex $x$ is colored by all the available colors. Analogously, $\chi''(x, y) = 0$ means that $x$ and $y$ are nonadjacent, while $\chi''(x, y) = 1$ means that the edge $\{x, y\}$ is colored by all the available colors.

A {\em homomorphism} between two $L$-colored graphs $(V_1,\chi'_1,\chi''_1)$ and $(V_2,\chi'_2,\chi''_2)$ is a mapping $f:V_1 \to V_2$ such that
\begin{equation*}
\chi'_1(x) \preceq \chi'_2(f(x)) \text{ and } \chi''_1(x,y) \preceq \chi''_2(f(x),f(y)),
\end{equation*}
for all $x$ and $y$ in $V_1$.

For $W \subseteq V$, a \emph{substructure of $(V,\chi',\chi'')$ induced by $W$} is $G[W] = (W,\chi'|_W, \chi''|_W)$, where $\chi'|_W$ and $\chi''|_W$ denote the restrictions of $\chi'$ and $\chi''$ to $W$, respectively.

For an $L$-colored graph $G = (V, \chi', \chi'')$ and $\alpha \in L$ let $W_\alpha = \{x \in V : \chi'(x) = \alpha \}$ and $G^{(\alpha)} = G[W_\alpha]$.

We say that an $L$-colored graph $G = (V,\chi',\chi'')$ is {\em homomorphism-homogeneous} ({\em $HH$-homogeneous} for short) if every homomorphism $f:S \rightarrow T$ between finite induced substructures of $G$ extends to an endomorphism of~$G$. We say that an $L$-colored graph $G = (V,\chi',\chi'')$ is {\em $MH$-homogeneous} if every monomorphism $f:S \rightarrow T$ between finite induced substructures of $G$ extends to an endomorphism of~$G$.

Let $G = (V, \chi', \chi'')$ be an $L$-colored graph, and let $\theta_G \subseteq V^2$ be the reflexive transitive closure of $\theta^0_G = \{(x, y) \in V^2 : \chi''(x, y) \ne 0\}$. Then $\theta_G$ is an equivalence relation on $V$ whose equivalence classes will be referred to as {\em connected components} of $G$. An $L$-colored graph $G$ is \emph{connected} if $\theta_G$ has only one equivalence class. Otherwise, it is \emph{disconnected}. We say that $G$ is \emph{complete} if $\chi''(x, y) \ne 0$ for all $x \ne y$.

An $L$-colored graph $G = (V, \chi', \chi'')$ is \emph{vertex-uniform} if there exists an $\alpha \in L$ such that $\chi'(x) = \alpha$ for all vertices $x$, and it is \emph{edge-uniform} if there exists a $\beta \in L \setminus \{0\}$ such that $\chi''(x, y) = \beta$ for all vertices $x$, $y$ such that $x \ne y$. We say that an $L$-colored graph $G = (V, \chi', \chi'')$ is \emph{uniform} if it is both vertex-uniform and edge-uniform. Up to isomorphim, a finite connected uniform graph is uniquely determined by $n=|V|$, the color of vertices $\alpha$ and the color of edges $\beta \succ 0$, and we denote it by~$U(n, \alpha, \beta)$.

If there is no danger of confusion, we shall write simply $\chi(x)$ and $\chi(x, y)$ instead of $\chi'(x)$ and $\chi''(x, y)$, respectively. Also, the set of vertices of $G$ will be denoted by $V(G)$.

\begin{lem}\label{lem:pump-pump}
Let $G$ be an $MH$-homogeneous $L$-colored graph. Assume that there exist three distinct vertices
$a_0, a_1, x \in V(G)$ such that:
\begin{itemize}
\item[(i)] $\chi(a_0, a_1) \succ 0$ and $\chi(x, a_1) \succ 0$,
\item[(ii)] $\chi(a_0, x) \preceq \chi(a_0, a_1)$ and $\chi(x) \preceq \chi(a_1)$, and
\item[(iii)] $\chi(a_0, x) \prec \chi(a_0, a_1)$ or $\chi(x) \prec \chi(a_1)$.
\end{itemize}
Then $G$ is not finite.
\end{lem}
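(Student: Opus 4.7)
My plan is to argue by contradiction: assume $G$ is finite, and construct an infinite sequence of pairwise distinct vertices via iterated $MH$-homogeneity.

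\emph{First pump.} The map $\psi \colon G[\{a_0,x\}] \to G[\{a_0,a_1\}]$ given by $a_0 \mapsto a_0$ and $x \mapsto a_1$ is a monomorphism: the homomorphism inequalities $\chi(x) \preceq \chi(a_1)$ and $\chi(a_0,x) \preceq \chi(a_0,a_1)$ that $\psi$ must satisfy are exactly hypothesis~(ii), and injectivity is clear. By $MH$-homogeneity, $\psi$ extends to an endomorphism $\bar\psi$ of $G$. Setting $a_{n+1} := \bar\psi(a_n)$ for $n \ge 1$ yields $a_n = \bar\psi^{n-1}(a_1) = \bar\psi^n(x)$. Because $\bar\psi$ is a homomorphism, the sequences $\chi(a_n)$, $\chi(a_0,a_n)$ and $\chi(a_n,a_{n+1})$ are weakly $\preceq$-increasing, with $\chi(a_n,a_{n+1}) \succeq \chi(x,a_1) \succ 0$ throughout; in particular each $a_n$ is $\chi$-adjacent to $a_0$ and to $a_{n+1}$.

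\emph{Base case and exclusion of $x$.} If $a_2 = a_0$ then $\chi(a_0,a_1) \preceq \chi(a_0,a_0) = 0$, violating~(i); likewise $a_2 = a_1$ would give $\chi(x,a_1) \preceq 0$. If $a_2 = x$ the homomorphism property of $\bar\psi$ collapses both inequalities of~(ii) to equalities, contradicting~(iii). More generally, the assumption $\bar\psi^n(x) = x$ for some $n \ge 1$ would turn the weakly increasing chain $\chi(x) \preceq \chi(a_1) \preceq \cdots \preceq \chi(\bar\psi^n(x)) = \chi(x)$ into a chain of equalities (and similarly for $\chi(a_0,\cdot)$), yielding $\chi(x) = \chi(a_1)$ and $\chi(a_0,x) = \chi(a_0,a_1)$, again contradicting~(iii). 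Hence $x \notin \{a_n : n \ge 1\}$.

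\emph{Distinctness of the $a_n$ --- second pump.} Suppose toward contradiction that $a_j = a_k$ with $1 \le j < k$ and $k$ minimal; then $\bar\psi$ cyclically permutes the cycle $C := \{a_j,\ldots,a_{k-1}\}$ (of length $k-j \ge 2$, since $\chi(a_n,a_{n+1}) \succ 0$ forbids fixed points), and the three monotone sequences above become constant on $C$. I now apply $MH$-homogeneity a second time to a monomorphism built from $C$ together with $x$ --- for instance a map sending $x$ to a vertex of $C$ while keeping a portion of the orbit structure in place --- and run the collapse argument of the previous paragraph on the resulting endomorphism to force the image of $a_1$ (or of a suitable $a_n$) to lie outside $C \cup \{x\}$. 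The fresh vertex so produced can be fed back into the first pump to restart the construction, yielding a further new vertex, and so on indefinitely; the resulting infinite chain of pairwise distinct vertices in $G$ contradicts finiteness. The main obstacle, and what may justify the lemma's ``pump-pump'' name, is the careful design of this second monomorphism so that the auxiliary vertex~$x$ --- which carries the strict inequality of~(iii) --- is positioned in it in such a way that the collapse argument genuinely forces the new image outside $C$, regardless of which alternative of~(iii) is in force.
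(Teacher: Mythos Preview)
Your argument has a genuine gap: the entire ``second pump'' paragraph is a placeholder rather than a proof. You acknowledge as much by calling the design of that monomorphism ``the main obstacle'' and never writing it down. But this is precisely the content of the lemma --- the single endomorphism $\bar\psi$ you build in the first pump can perfectly well have a finite orbit, and escaping that cycle is the whole difficulty.

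More concretely, your iteration of one fixed $\bar\psi$ does not maintain enough information to support a second pump. After the first step you know $\chi(a_0,a_n)$, $\chi(a_n)$, and $\chi(a_n,a_{n+1})$ are monotone, but you know nothing about $\chi(x,a_n)$ for $n\ge 2$, nor about $\chi(a_i,a_m)$ for $|i-m|\ge 2$. So when you write ``a map sending $x$ to a vertex of $C$ while keeping a portion of the orbit structure in place'', you cannot verify this is a monomorphism: the required inequalities $\chi(x,a_m)\preceq\chi(a_?,a_m)$ are simply unavailable. And even if one such map produced a single vertex outside $C\cup\{x\}$, ``feeding it back into the first pump'' just lands you in another (possibly the same) cycle; you give no mechanism to ensure the new vertices are pairwise distinct from \emph{all} earlier ones.

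The paper's proof avoids this by \emph{not} iterating a single endomorphism. Instead, at step $n$ it builds a fresh monomorphism $f_{n+1}$ whose domain contains $x$ together with \emph{all} of $a_0,\dots,a_n$ except one carefully chosen index $m(n)=\max\{j\le n:\chi(x,a_j)\succ 0\}$; it fixes those $a_i$ and sends $x\mapsto a_{m(n)}$. The point is that an inductively maintained invariant --- $\chi(a_j,x)\preceq\chi(a_j,a_k)$ for all $j<k$ --- makes this a monomorphism, and having almost every $a_i$ fixed forces the new image $a_{n+1}=f_{n+1}^*(a_{m(n)})$ to differ from each previous $a_i$ (adjacency), from $a_{m(n)}$ (since $\chi(x,a_{m(n)})\succ 0$), and from $x$ (via the strict inequality~(iii)). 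The ``pump'' is thus rebuilt at every step with an ever-larger fixed set, which is exactly what guarantees pairwise distinctness. Your one-shot $\bar\psi$ cannot play this role.
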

\begin{proof}
Let us construct inductively a sequence of mappings $f_2$, $f_3$, \ldots, and a sequence of vertices $a_2, a_3, \ldots \in V(G)$ with the following properties:
\begin{itemize}
\item[(1)]
   Let $m(n) = \max\{j \in \{1, \ldots, n\} : \chi(x, a_j) \succ 0\}$. (Note that $m(n) \ge 1$ due to~(i)). The mapping
   $\displaystyle f_{n+1} = \begin{pmatrix}
      a_0 & \ldots & a_{m(n) - 1} & a_{m(n) + 1} & \ldots & a_n & x\\
      a_0 & \ldots & a_{m(n) - 1} & a_{m(n) + 1} & \ldots & a_n & a_{m(n)} \end{pmatrix}$
    is a monomorphism from $G[a_0, \ldots, a_{m(n) - 1}, x,  a_{m(n) + 1}, \ldots, a_n]$ to $G[a_0, \ldots, a_n]$.
\item[(2)]
    $G$ is $MH$-homogeneous so there is an endomorphism $f_{n+1}^*$ of $G$ which extends~$f_{n+1}$ and we let
    $a_{n+1} = f_{n+1}^*(a_{m(n)})$.
\item[(3)]
    $a_{n+1} \notin \{x, a_0, \ldots, a_n\}$.
\item[(4)]
    $\chi(a_i, a_j) \succ 0$ for all $i, j \in \{0, \ldots, n+1\}$ such that $i \ne j$.
\item[(5)]
    $\chi(a_0, a_1) \preceq \chi(a_0, a_j)$ and $\chi(a_1) \preceq \chi(a_j)$, for all $1 \le j \le n+1$.
\item[(6)]
    $\chi(a_j, x) \preceq \chi(a_j, a_k)$ and $\chi(x) \preceq \chi(a_k)$ for all $0 \le j < k \le n+1$.
\item[(7)]
    $\chi(a_0, x) \prec \chi(a_0, a_j)$ or $\chi(x) \prec \chi(a_j)$, for all $1 \le j \le n+1$. 
\end{itemize}

The inductive construction proceeds in several steps, and the corresponding $L$-colored subgraphs can be depicted as in Figure~\ref{fig:main_lemma_induction}.

\begin{figure}[ht]
\begin{center}
\includegraphics[scale=1.0]{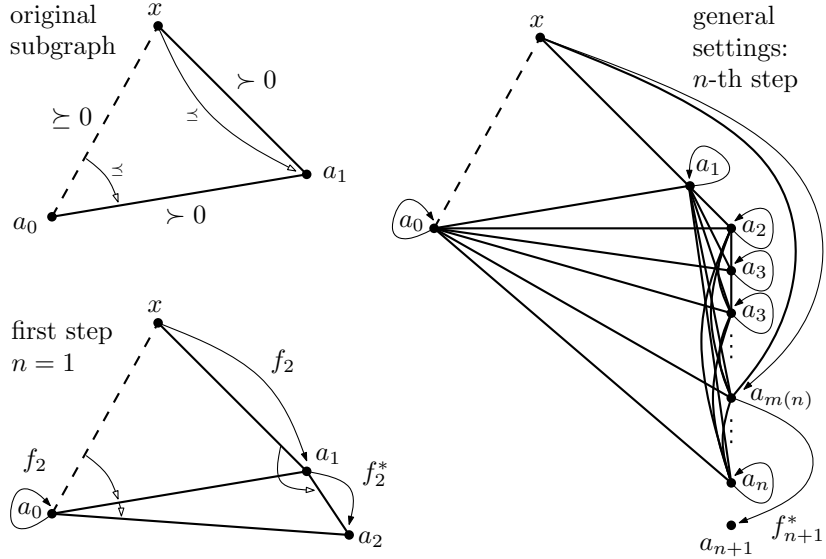}
\caption{The original subgraph, the first step and general settings for the inductive construction. Bold lines without arrows represent edges---solid lines are those having colors $\succ 0$ and dashed those having colors $\succeq 0$. Thin lines with full arrows represents mappings and thin lines with empty arrows indicate the direction of the succession in colors.}
\label{fig:main_lemma_induction}
\end{center}
\end{figure}

The mapping $\displaystyle f_2 = \begin{pmatrix} a_0 & x \\ a_0 & a_1 \end{pmatrix}$ is a monomorphism from $G[a_0, x]$ to $G[a_0, a_1]$ by (ii), while in case $n > 2$ the requirement $(6)$ for $n$ (inductive hypothesis) and the fact that $\chi(a_j, x) = 0$ for $j > m(n)$ ensure that $f_{n+1}$ is a monomorphism from from $G[a_0, \ldots, a_{m(n) - 1}, x,  a_{m(n) + 1}, \ldots, a_n]$ to $G[a_0, \ldots, a_n]$. This shows~$(1)$.

Let us show that $(3)$ holds for $a_{n+1}$ assuming $(1)$--$(7)$ for $n$.
\begin{itemize}
\item
if $a_{n+1} = x$ then $\chi(a_0, a_{m(n)}) \preceq \chi(f_{n+1}^*(a_0), f_{n+1}^*(a_{m(n)})) = \chi(a_0, x)$ and $\chi(a_{m(n)}) \preceq \chi(f_{n+1}^*(a_{m(n)})) = \chi(x)$, which contradicts~(7);
\item
if $a_{n+1} = a_{m(n)}$ then $0 = \chi(a_{m(n)}, a_{n+1}) = \chi(f_{n+1}^*(x), f_{n+1}^*(a_{m(n)})) \succeq \chi(x, a_{m(n)})$, but  $\chi(x, a_{m(n)}) \succ 0$ by definition of $m(n)$ -- contradiction;
\item
if $a_{n+1} = a_j$ for some $j \ne m(n)$ then, by (4), $0 \prec \chi(a_j, a_{m(n)}) \preceq \chi(f_{n+1}^*(a_j), f_{n+1}^*(a_{m(n)})) = \chi(a_j, a_{n+1}) = \chi(a_j, a_j) = 0$ -- contradiction.
\end{itemize}

Let us show that $(4)$ holds for $a_{n+1}$ assuming $(1)$--$(7)$ for $n$. Clearly, it suffices to show that $\chi(a_i, a_{n+1}) \succ 0$ for all $0 \le i \le n$.
\begin{itemize}
\item
if $i \ne m(n)$ then $\chi(a_i, a_{n+1}) = \chi(f_{n+1}^*(a_i), f_{n+1}^*(a_{m(n)})) \succeq \chi(a_i, a_{m(n)}) \succ 0$ by the induction hypothesis;
\item
if $i = m(n)$ then $\chi(a_{m(n)}, a_{n+1}) = \chi(f_{n+1}^*(x), f_{n+1}^*(a_{m(n)})) \succeq \chi(x, a_{m(n)}) \succ 0$ by definition of $m(n)$.
\end{itemize}

To see that (5) holds for $a_{n+1}$ we use the induction hypothesis and the fact that $f_{n+1}^*$ is a homomorphism:
\begin{itemize}
\item
$\chi(a_0, a_1) \preceq \chi(a_0, a_{m(n)}) \preceq \chi(f_{n+1}^*(a_0), f_{n+1}^*(a_{m(n)})) = \chi(a_0, a_{n+1})$;
\item
$\chi(a_1) \preceq \chi(a_{m(n)}) \preceq \chi(f_{n+1}^*(a_{m(n)})) = \chi(a_{n+1})$.
\end{itemize}

Let us show that (6) holds for $a_{n+1}$. As above, from (ii) and (5) we immediately get $\chi(x) \preceq \chi(a_1) \preceq \chi(a_{n+1})$. To see that $\chi(a_j, x) \preceq \chi(a_j, a_{n+1})$ for all $j \in \{0, \ldots, n\}$ we consider several cases: 
\begin{itemize}
\item
if $j > m(n)$ then $\chi(a_j, x) = 0$ by definition of $m(n)$ so  $\chi(a_j, x) \preceq \chi(a_j, a_{n+1})$ holds trivially;
\item
if $j < m(n)$ then using the induction hypothesis and the fact that $f_{n+1}^*$ is a homomorphism we get $\chi(a_j, x) \preceq \chi(a_j, a_{m(n)}) \preceq \chi(f_{n+1}^*(a_j), f_{n+1}^*(a_{m(n)})) = \chi(a_j, a_{n+1})$;
\item
if $j = m(n)$ then $\chi(a_{m(n)}, x) \preceq \chi(f_{n+1}^*(a_{m(n)}), f_{n+1}^*(x)) = \chi(a_{n+1}, a_{m(n)})$.
\end{itemize}

Finally, (7) follows from (5) and (iii).

Therefore, $G$ contains an infinite sequence $a_0, a_1, a_2, \ldots$ of pairwise distinct vertices, so it cannot be finite.
\end{proof}

In the rest of the paper we restrict our attention to two types of partially ordered sets~$L$: chains and diamonds.

\section{$L$-colored graphs over chains}
\label{s:chains}

In this section we classify finite $MH$-homogeneous $L$-colored graphs where $L$ is a bounded chain and show that in this setting the classes $MH$ and $HH$ coincide. So, let $L$ be a chain with the least element~0 and the greatest element~1.

\begin{lem}\label{lem.1}
Let $G$ be a finite $L$-colored graph which is $MH$-homogeneous. Assume that $x, y, z$ are three distinct vertices of $G$ satisfying $\chi(x, z) \succ 0$ and $\chi(y, z) \succ 0$. Then:

  $(a)$ $\chi(x, y) \prec \chi(x, z)$ if and only if $\chi(y) \succ \chi(z)$;

  $(b)$ $\chi(x, y) = \chi(x, z)$ if and only if $\chi(y) = \chi(z)$.
\end{lem}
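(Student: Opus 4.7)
The plan is to exploit the trichotomy available in the chain $L$ and invoke Lemma~\ref{lem:pump-pump} to rule out the configurations that would violate the claimed equivalences. Since $L$ is totally ordered, the pair $(\chi(x,y), \chi(x,z))$ satisfies exactly one of $\prec$, $=$, $\succ$, and the same holds for $(\chi(y), \chi(z))$. I would first establish the forward implications of (a) and (b); the converses then follow for free, because any alternative relation among $\chi(y), \chi(z)$ would, by the forward direction, force an alternative relation among $\chi(x,y), \chi(x,z)$, contradicting the hypothesis.

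For the forward direction of (a), assume $\chi(x,y) \prec \chi(x,z)$ and, toward a contradiction, that $\chi(y) \preceq \chi(z)$. Apply Lemma~\ref{lem:pump-pump} with $a_0 := x$, $a_1 := z$, and the role of the lemma's ``$x$'' played by $y$. Condition (i) reduces to the hypotheses $\chi(x,z) \succ 0$ and $\chi(y,z) \succ 0$; condition (ii) combines $\chi(x,y) \prec \chi(x,z)$ with the contradictory assumption $\chi(y) \preceq \chi(z)$; condition (iii) holds because the first inequality is strict. Lemma~\ref{lem:pump-pump} then forces $G$ to be infinite, contradicting finiteness, so $\chi(y) \succ \chi(z)$.

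For the forward direction of (b), assume $\chi(x,y) = \chi(x,z)$ and, toward a contradiction, that $\chi(y) \neq \chi(z)$; by chain-ness and by the $y \leftrightarrow z$ symmetry of the hypotheses, I may take $\chi(y) \succ \chi(z)$. Now apply Lemma~\ref{lem:pump-pump} with $a_0 := x$, $a_1 := y$, and the role of ``$x$'' played by $z$. Since $\chi(x,y) = \chi(x,z) \succ 0$ and $\chi(z,y) \succ 0$, (i) holds; the equality $\chi(x,z) \preceq \chi(x,y)$ together with $\chi(z) \preceq \chi(y)$ gives (ii); and the strict $\chi(z) \prec \chi(y)$ supplies the second disjunct of (iii). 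The lemma again forces $G$ to be infinite, contradiction.

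The only subtlety is bookkeeping: in each invocation of Lemma~\ref{lem:pump-pump} one must choose $a_0, a_1$ so that the two edge-colors appearing in condition (i) are genuinely nonzero. In case (a) this forces $a_1 := z$ (the endpoint whose edge color to $x$ is the larger one); in case (b) the equality $\chi(x,y) = \chi(x,z)$ makes the two possibilities symmetric, so the WLOG direction $\chi(y) \succ \chi(z)$ dictates taking $a_1 := y$. Beyond this, the argument is a routine case analysis, with no combinatorial core to overcome once Lemma~\ref{lem:pump-pump} is available.
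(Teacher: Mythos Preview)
Your argument is correct and follows essentially the same route as the paper: both proofs reduce the claim to Lemma~\ref{lem:pump-pump} by feeding it the offending triple with the appropriate assignment of $a_0$, $a_1$, and the lemma's ``$x$''. The only difference is organizational: the paper proves the biconditional $(a)$ directly by splitting its negation into the two cases $\chi(x,y)\prec\chi(x,z)\wedge\chi(y)\preceq\chi(z)$ and $\chi(x,y)\succeq\chi(x,z)\wedge\chi(y)\succ\chi(z)$, and then observes that $(b)$ is immediate from $(a)$ by trichotomy; you instead prove the forward implications of $(a)$ and $(b)$ separately and recover the converses via trichotomy. One small remark: your sentence justifying the converses has the roles reversed---what you actually use is that an \emph{alternative relation among $\chi(x,y),\chi(x,z)$} forces, by the forward directions (with the $y\leftrightarrow z$ swap when needed, which is legitimate since $\chi(x,y)\succ 0$ in that subcase), an alternative relation among $\chi(y),\chi(z)$.
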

\begin{proof}
Clearly, $(b)$ follows immediately from $(a)$ because $L$ is a chain. Let us show $(a)$. Suppose that $G$ is a finite $MH$-homogeneous $L$-colored graph, and let $x, y, z$ be three distinct vertices of $G$ satisfying $\chi(x, z) \succ 0$ and $\chi(y, z) \succ 0$ but not $(a)$. Then either
  $$
    \chi(x, y) \prec \chi(x, z) \text{\quad and\quad} \chi(y) \preceq \chi(z)
  $$
or
  $$
    \chi(x, y) \succeq \chi(x, z) \text{\quad and\quad} \chi(y) \succ \chi(z).
  $$
In both cases finiteness of $G$ contradicts Lemma~\ref{lem:pump-pump}.
\end{proof}

\begin{lem}\label{lem.2}
Let $G$ be a finite $MH$-homogeneous $L$-colored graph. Then:

  $(a)$ for every $\alpha \in L$, every connected component of $G^{(\alpha)}$ is a uniform graph;

  $(b)$ for all $x, y \in V(G)$, if $\chi(x, y) \succ 0$ then $\chi(x) = \chi(y)$;

  $(c)$ every connected component of $G$ is a uniform graph.
\end{lem}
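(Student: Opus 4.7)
My plan is to establish the three parts in order, with (c) reduced to (a) and (b). The key tools are Lemma~\ref{lem.1} (for (a) and a subcase of (b)) and the pumping Lemma~\ref{lem:pump-pump} (for the contradiction that drives (b)).

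For (a), I would fix a connected component $C$ of $G^{(\alpha)}$; since every vertex of $C$ already has colour $\alpha$, only edge-uniformity and completeness remain. Picking $z \in C$ with two neighbours $u, v \in C$, two applications of Lemma~\ref{lem.1}(b)---first at centre $z$ to obtain $\chi(u, v) = \chi(u, z)$, then at centre $u$ (legal because $\chi(u, v) \succ 0$) to obtain $\chi(v, z) = \chi(u, z)$---show that all edges of $C$ meeting $z$ share a common colour $\beta$. An induction on path length from $z$ (invoking Lemma~\ref{lem.1}(b) at each step) shows that every $w \in C$ is a $\beta$-coloured neighbour of $z$; one last application at centre $z$ promotes this to all pairs in $C$, so $C = U(|C|, \alpha, \beta)$.

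The main obstacle is (b). I argue by contradiction: assume there exist adjacent $u, v$ with $\chi(u) \ne \chi(v)$; since $L$ is a chain, WLOG $\chi(u) \prec \chi(v)$. Among the finitely many such ``bad'' pairs I choose $(x, y)$ maximising $\chi(y)$, and extend the monomorphism $x \mapsto y$ by $MH$-homogeneity to an endomorphism $f^*$. Setting $z = f^*(y)$, the homomorphism property gives $\chi(y, z) \succeq \chi(x, y) \succ 0$ and $\chi(z) \succeq \chi(y)$, and a quick check rules out $z \in \{x, y\}$. The crucial observation is that $\chi(z) \succ \chi(y)$ would make $(y, z)$ a bad pair with larger $\chi$-value, violating maximality, so $\chi(z) = \chi(y)$. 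I then apply Lemma~\ref{lem:pump-pump} with $a_0 = z$, $a_1 = y$, and the lemma's third vertex equal to our $x$. Conditions (i) and (iii) follow at once from $\chi(x) \prec \chi(y)$; the only nontrivial check is $\chi(z, x) \preceq \chi(z, y)$ in (ii). When $\chi(z, x) = 0$ this is trivial, and when $\chi(z, x) \succ 0$ Lemma~\ref{lem.1}(b) applied to $x, z, y$ at centre $y$ (using $\chi(z) = \chi(y)$) gives $\chi(z, x) = \chi(x, y) \preceq \chi(z, y)$, the last inequality by construction of $z$. The pumping lemma then makes $G$ infinite, a contradiction.

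Finally, (c) is a short consequence: for any connected component $C$ of $G$, adjacent vertices along a path share a colour by (b), so $C \subseteq W_\alpha$ for some $\alpha$; since the $G$-edges with both endpoints in $W_\alpha$ coincide with those of $G^{(\alpha)}$, $C$ is also a connected component of $G^{(\alpha)}$, and (a) identifies $C$ as uniform.
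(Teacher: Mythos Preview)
Your argument is correct. Parts~(a) and~(c) follow the paper almost verbatim (the paper also handles the trivial cases $|C|\le 2$ explicitly, which you leave implicit, but this is harmless). The genuine difference lies in part~(b). The paper builds an infinite sequence $x_1,x_2,\ldots$ by repeatedly extending the monomorphism $x_{n-1}\mapsto x_n$, obtains $\chi(x_1)\preceq\chi(x_2)\preceq\cdots$ and $\chi(x_{i-1},x_i)\preceq\chi(x_i,x_{i+1})$, uses finiteness of $G$ to locate an index with $\chi(x_{n-2})\prec\chi(x_{n-1})=\chi(x_n)$, and then derives a contradiction \emph{directly from Lemma~\ref{lem.1}} (no appeal to Lemma~\ref{lem:pump-pump}). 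You instead short-circuit the sequence by choosing a bad pair $(x,y)$ with $\chi(y)$ maximal, produce a single new vertex $z=f^*(y)$, force $\chi(z)=\chi(y)$ by maximality, and feed the triple $(z,y,x)$ into Lemma~\ref{lem:pump-pump}. Your route is arguably cleaner---one extension instead of an inductive construction---at the cost of invoking the heavier pumping lemma rather than the lightweight corollary Lemma~\ref{lem.1}; the paper's route stays entirely within Lemma~\ref{lem.1} once the sequence is set up. Both are valid and of comparable length.
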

\begin{proof}
$(a)$
Take any $\alpha \in L$ and let $S$ be a connected component of $G^{(\alpha)}$. Then, by the definition of $G^{(\alpha)}$, we have that $\chi(x) = \alpha$ for all $x \in S$. Let us show that $\chi(x, y)$ is constant for all $x, y \in S$ satisfying $x \ne y$. If $|S| = 1$ or $|S| = 2$ the claim is trivial. Assume that $|S| \ge 3$. Since $S$ is a connected component, it suffices to show that whenever $x, y, z \in S$ are three distinct vertices such that $\chi(x, z) \succ 0$ and $\chi(y, z) \succ 0$, then $\chi(x, z) = \chi(y, z) = \chi(x, y)$. So, let $x, y, z \in S$ be three distinct vertices satisfying $\chi(x, z) \succ 0$ and $\chi(y, z) \succ 0$. Since $\chi(y) = \chi(z) = \alpha$, Lemma~\ref{lem.1} yields that $\chi(x, y) = \chi(x, z)$. Analogously, $\chi(x, y) = \chi(y, z)$.

$(b)$
Assume that there exist $x_1, x_2 \in V(G)$ such that $\chi(x_1, x_2) \succ 0$ and $\chi(x_1) \ne \chi(x_2)$. Without loss of generality we can assume that $\chi(x_1) \prec \chi(x_2)$. Let us now construct inductively a sequence of mappings $f_3$, $f_4$, \ldots, and a sequence of vertices $x_3$, $x_4$, $\ldots$ with the following properties:
\begin{itemize}
\item[(1)]
the mapping $\displaystyle f_{n+1} : \begin{pmatrix}
      x_{n-1} \\
      x_{n} \end{pmatrix}$
is a monomorphism from $G[x_{n-1}]$ to $G[x_{n}]$;
\item[(2)]
$G$ is $MH$-homogeneous so there is an endomorphism $f_{n+1}^*$ of $G$ which extends~$f_{n+1}$ and we let $x_{n+1} = f_{n+1}^*(x_{n})$;
\item[(3)]
$\chi(x_{i-1}) \preceq \chi(x_i)$ for all $i \in \{2, 3, \ldots, n\}$.
\end{itemize}

The mapping $\displaystyle f_3 : \begin{pmatrix} x_1 \\ x_2 \end{pmatrix}$ is easily seen to be a monomorphism from $G[x_1]$ to $G[x_2]$ since $\chi(x_1) \prec \chi(x_2)$, while in case $n \ge 3$, the requirement $(3)$ for $i = n$ ensures that $f_{n+1}$ is a monomorphism from $G[x_{n-1}]$ to $G[x_{n}]$. This shows $(1)$, and $(3)$ for $i = n+1$ follows immediately from~$(2)$. Note, also, that
$$
    \chi(x_{i-1}, x_i) = \chi(f_i^*(x_{i-2}), f_i^*(x_{i-1})) \succeq \chi(x_{i-2}, x_{i-1}), \text{ for all } i.
$$
Therefore, we have constructed a sequence of vertices $x_1, x_2, x_3, \ldots$ such that $\chi(x_1) \preceq \chi(x_2) \preceq \chi(x_3) \preceq \ldots$ and $0 \prec \chi(x_1, x_2) \preceq \chi(x_2, x_3) \preceq \chi(x_3, x_4) \preceq \ldots$. Since $\chi(x_1) \prec \chi(x_2)$ and since $G$ is finite there exists an $n$ such that $\chi(x_{n-2}) \prec \chi(x_{n-1}) = \chi(x_n)$. Then Lemma~\ref{lem.1} yields that $\chi(x_{n-2}, x_{n-1}) = \chi(x_{n-2}, x_n) \succ 0$ since $\chi(x_{n-1}) = \chi(x_n)$. By the same lemma we also have $\chi(x_{n-1}, x_n) \prec \chi(x_{n-2}, x_{n-1})$ since $\chi(x_n) \succ \chi(x_{n-2})$ . On the other hand, $\chi(x_{n-1}, x_n) \succeq \chi(x_{n-2}, x_{n-1})$ by construction. Contradiction.

$(c)$
It follows from $(b)$ that $S$ is a connected component of $G$ if and only if $S$ is a connected component of $G^{(\alpha)}$ for some $\alpha \in L$. Therefore, every connected component of $G$ is a uniform graph.
\end{proof}

\begin{thm}
Let $G$ be a finite $L$-colored graph where $L$ is a chain with the least element~0 and the greatest element~1. Then the following are equivalent:
\begin{itemize}
\item[(1)] $G$ is $HH$-homogeneous,
\item[(2)] $G$ is $MH$-homogeneous,
\item[(3)] $G$ has the following structure:
  \begin{itemize}
  \item
    every connected component of $G$ is a uniform $L$-colored graph, and
  \item
    if $U(n_1, \alpha_1, \beta_1)$ and $U(n_2, \alpha_2, \beta_2)$ are connected
    components of $G$ such that $\alpha_1 \preceq \alpha_2$, then $n_1 \le n_2$ and $\beta_1 \preceq \beta_2$.
    Consequently, if $\alpha_1 = \alpha_2$, then $n_1 = n_2$ and $\beta_1 = \beta_2$.
  \end{itemize}
\end{itemize}
\end{thm}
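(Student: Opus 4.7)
The plan is to prove $(1) \Rightarrow (2) \Rightarrow (3) \Rightarrow (1)$. The implication $(1) \Rightarrow (2)$ is immediate because every monomorphism is in particular a homomorphism, so any endomorphism that extends a given homomorphism automatically extends the given monomorphism.

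For $(2) \Rightarrow (3)$: Lemma~\ref{lem.2}(c) already delivers the first bullet of $(3)$, namely that every connected component of $G$ is a uniform graph. The remaining work is to compare two components $U_1 = U(n_1, \alpha_1, \beta_1)$ and $U_2 = U(n_2, \alpha_2, \beta_2)$ satisfying $\alpha_1 \preceq \alpha_2$. I would pick $u_1 \in U_1$ and $u_2 \in U_2$ and observe that the singleton map $u_1 \mapsto u_2$ is a monomorphism of induced substructures since the only constraint is $\chi(u_1) = \alpha_1 \preceq \alpha_2 = \chi(u_2)$. By $MH$-homogeneity this extends to an endomorphism $f^*$. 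Because $U_1$ is complete with edge color $\beta_1 \succ 0$, for every $v \in U_1 \setminus \{u_1\}$ one has $\chi(u_2, f^*(v)) \succeq \chi(u_1, v) = \beta_1 \succ 0$, which places $f^*(v)$ in the connected component of $u_2$; hence $f^*(U_1) \subseteq U_2$. Injectivity of $f^*|_{U_1}$ is forced because for distinct $x, y \in U_1$ we have $\beta_1 \preceq \chi(f^*(x), f^*(y))$, ruling out $f^*(x) = f^*(y)$. This gives $n_1 \le n_2$, and for any distinct $x, y \in U_1$, $\beta_1 = \chi(x,y) \preceq \chi(f^*(x), f^*(y)) = \beta_2$.

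For $(3) \Rightarrow (1)$: given a homomorphism $f : S \to T$ between finite induced substructures, I would build an extending endomorphism component by component. Fix a component $U_i = U(n_i, \alpha_i, \beta_i)$ of $G$ and set $S_i = S \cap U_i$. If $|S_i| \ge 2$, the fact that any two distinct vertices of $S_i$ carry edge color $\beta_i \succ 0$ forces $f|_{S_i}$ to be injective and forces $f(S_i)$ to lie in a single component $U_j = U(n_j, \alpha_j, \beta_j)$; vertex-color preservation gives $\alpha_i \preceq \alpha_j$, so $(3)$ supplies $n_i \le n_j$ and $\beta_i \preceq \beta_j$. This is exactly the room needed to extend $f|_{S_i}$ to an injection $\tilde f_i : U_i \to U_j$, and the extension is automatically a homomorphism since $U_i$ and $U_j$ are uniform with compatible colors. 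The cases $|S_i| = 1$ and $|S_i| = 0$ are straightforward: in the first, the component $U_j$ of the unique image vertex satisfies $\alpha_i \preceq \alpha_j$ so we extend as before; in the second, we may simply set $\tilde f_i = \mathrm{id}_{U_i}$. Gluing all the $\tilde f_i$ yields an endomorphism of $G$ extending $f$.

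The main obstacle I expect is showing, inside $(2) \Rightarrow (3)$, that the endomorphism $f^*$ produced by $MH$-homogeneity carries the whole of $U_1$ into $U_2$ rather than letting some vertices spill into a third component. The completeness of the uniform components is the essential ingredient: every vertex of $U_1$ is joined to $u_1$ by an edge of color $\beta_1 \succ 0$, and this is preserved by $f^*$, pinning the image inside the connected component of $u_2$. Once this containment and the injectivity of $f^*|_{U_1}$ are in hand, the inequalities $n_1 \le n_2$ and $\beta_1 \preceq \beta_2$ drop out of a single use of the homomorphism condition, and the remaining implication $(3) \Rightarrow (1)$ reduces to a componentwise assembly with no further obstacles.
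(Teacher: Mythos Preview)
Your proposal is correct and follows essentially the same route as the paper: the same cycle of implications, the same appeal to Lemma~\ref{lem.2}(c) for the uniform components, and the same one-point monomorphism $u_1 \mapsto u_2$ extended to an endomorphism to compare two components. The only difference is that you spell out the componentwise construction for $(3)\Rightarrow(1)$ and the containment $f^*(U_1)\subseteq U_2$ in more detail than the paper, which simply declares the former ``easy'' and handles the latter by the general remark that endomorphisms send connected components into connected components.
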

\begin{proof}
$(3) \Rightarrow (1)$ is easy.

$(1) \Rightarrow (2)$ is obvious.

$(2) \Rightarrow (3)$.
Let $G$ be a finite $MH$-homogeneous $L$-colored graph. We already know from Lemma~\ref{lem.2} that every connected component of $G$ is a uniform graph. So, let $S_1$ and $S_2$ be connected components of $G$ such that $G[S_1] \cong U(n_1, \alpha_1, \beta_1)$, $G[S_2] \cong U(n_2, \alpha_2, \beta_2)$ and assume that $\alpha_1 \preceq \alpha_2$. Let $x$ be an arbitrary vertex of~$S_1$ and $y$ an arbitrary vertex of~$S_2$. Then $f : \begin{pmatrix} x \\ y \end{pmatrix}$ is a monomorphism 
from $G[x]$ to $G[y]$, since $\chi(x) = \alpha_1 \preceq \alpha_2 = \chi(y)$.  So, by the homogeneity requirement, $f$ extends to an endomorphism $f^*$ of~$G$. It is easy to see that an endomorphism maps a connected component of $G$ into another connected component of $G$, so $f^*(S_1) \subseteq S_2$, since $f^*(x) = y \in S_2$. Moreover, $f^*|_{S_1}$ is injective (assume that $x, y \in S_1$ are two distinct vertices such that $f^*(x) = f^*(y)$; then $\chi(f^*(x), f^*(y)) = 0$ because $G$ is without loops; on the other hand, $\chi(f^*(x), f^*(y)) \succeq \chi(x, y) = \beta_1 \succ 0$ by the definition of an edge-uniform $L$-colored graph -- contradiction), so $n_1 = |S_1| \le |S_2| = n_2$. Finally, if $x, y \in S_1$ are two distinct vertices, then $\beta_1 = \chi(x, y) \preceq \chi(f^*(x), f^*(y)) = \beta_2$.
\end{proof}

\section{$L$-colored graphs over diamonds}
\label{s:diamonds}

In this section we consider $L$-colored graphs where $L$ is a diamond. We first consider finite vertex-uniform $L$-colored graphs and show that in this case the classes $MH$ and $HH$ coincide. We then provide an example of an $L$-colored graph which is $MH$-homogeneous, but not $HH$-homogeneous, proving thus that in the general case the classes $MH$ and $HH$ do not coincide for $L$-colored graphs where $L$ is a diamond.
So, let $L$ be a diamond with the least element~0 and the greatest element~1.

\begin{lem}\label{prop:bigraphswithredblue}
Let $G$ be a finite $MH$-homogeneous vertex-uniform $L$-colored graph and assume that there exist $x_0, y_0 \in V(G)$ such that $\chi(x_0, y_0) = 1$. Then the following holds:
\begin{enumerate}
\item[(1)] For every vertex $x$ there is a vertex $y$ such that $\chi(x, y) = 1$.
\item[(2)] Let $x,y,z$ be distinct vertices. If $\chi(x,y) = \chi(y,z) = 1$ then $\chi(x,z) = 1$.
\item[(3)] If $x$ and $y$ belong to the same connected component of $G$ then $\chi(x, y) = 1$.
\end{enumerate}
\end{lem}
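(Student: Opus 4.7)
For (1), given an arbitrary $x \in V(G)$, the singleton map $x_0 \mapsto x$ is a monomorphism from $G[\{x_0\}]$ to $G[\{x\}]$ because vertex-uniformity forces $\chi(x_0) = \chi(x)$. Extending it to an endomorphism $f^\ast$ by $MH$-homogeneity and putting $y := f^\ast(y_0)$ yields $1 = \chi(x_0, y_0) \preceq \chi(f^\ast(x_0), f^\ast(y_0)) = \chi(x, y)$, so $\chi(x, y) = 1$; absence of loops gives $y \neq x$.

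For (2), I argue by contradiction: assume $\chi(x,y) = \chi(y,z) = 1$ with $x,y,z$ distinct but $\chi(x,z) \neq 1$, and I invoke Lemma~\ref{lem:pump-pump} with $a_0 := z$, $a_1 := y$ and the lemma's third distinguished vertex equal to $x$. Condition~(i) is immediate since both edges at $y$ are coloured $1$; condition~(ii) is automatic because $1$ is the top of $L$ and vertex-uniformity gives $\chi(x) = \chi(y)$; condition~(iii) holds because $\chi(z,x) \preceq 1$ together with $\chi(z,x) \neq 1$ forces $\chi(z,x) \prec 1$ (in a diamond every non-maximal element lies strictly below~$1$). The lemma then produces an infinite sequence of vertices, contradicting finiteness of $G$.

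For (3) I proceed in two stages. Stage one establishes that every edge of $G$ already has colour $1$: if $\chi(u,v) = \gamma$ with $0 \prec \gamma \prec 1$, take by~(1) a vertex $w$ with $\chi(u,w) = 1$, noting that $w \notin \{u,v\}$. Were $\chi(v,w) = 1$, part~(2) applied to the distinct triple $u,w,v$ would give $\chi(u,v) = 1$, a contradiction; hence $\chi(v,w) \prec 1$. I then apply Lemma~\ref{lem:pump-pump} with $a_0 := w$, $a_1 := u$ and the third vertex equal to $v$: (i) holds since both $\chi(w,u)$ and $\chi(v,u)$ are nonzero, (ii) holds by vertex-uniformity, and (iii) holds because $\chi(w,v) \prec 1 = \chi(w,u)$, contradicting finiteness. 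Stage two treats distinct $x,y$ in the same component by picking a shortest (hence simple) path $x = v_0, v_1, \ldots, v_k = y$ and inducting on $k$: stage one gives $\chi(v_i, v_{i+1}) = 1$ for every $i$, the base case $k=1$ is immediate, and the inductive step combines $\chi(v_0, v_{k-1}) = 1$ with $\chi(v_{k-1}, v_k) = 1$ via part~(2) applied to the distinct triple $v_0, v_{k-1}, v_k$.

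The main obstacle I anticipate is the simultaneous verification of (i)--(iii) of Lemma~\ref{lem:pump-pump} in stage one of (3), where the delicate point is ruling out $\chi(v,w) = 1$; this is resolved precisely by appealing to part~(2), and therefore (2) must be established before attacking (3). All other steps become routine once one notices that vertex-uniformity collapses every vertex-colour clause of Lemma~\ref{lem:pump-pump} into an equality.
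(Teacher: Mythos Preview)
Your arguments for (1) and (2) match the paper's proof exactly, with the same use of Lemma~\ref{lem:pump-pump} and the same verification of its hypotheses.

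For (3) you take a genuinely different (though closely related) route. The paper argues directly: it fixes a maximal set $S$ containing $x$ on which $\chi \equiv 1$, notes $|S| \ge 2$ by~(1), and then shows $S$ equals the connected component of $x$ by picking a boundary vertex $z \in W \setminus S$ adjacent to some $y \in S$; maximality together with~(2) forces $\chi(s,z) \ne 1$ for every $s \in S$, and one application of Lemma~\ref{lem:pump-pump} (with $a_0 = x$, $a_1 = y$, third vertex $z$) gives the contradiction. Your two-stage approach instead first isolates the stronger intermediate fact that every edge already has colour~$1$, and then propagates along a shortest path via~(2). Both arguments hinge on the same invocation of Lemma~\ref{lem:pump-pump} against a triple with one edge of colour~$1$ and an adjacent edge of strictly smaller colour; the difference is only organisational. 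Your version yields the explicit corollary ``every edge is $1$-coloured'' along the way, while the paper's maximal-clique formulation is slightly more economical, handling the non-edge case ($\chi(x,z) = 0$) and the intermediate-colour case in a single stroke rather than splitting into stage one and stage two.
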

\begin{proof}
(1) Let $x$ be an arbitrary vertex. Then $f = \begin{pmatrix} x_0 \\ x \end{pmatrix}$ extends to an endomorphism $f^*$ of $G$, so $\chi(x, f^*(y_0)) = \chi(x_0, y_0) = 1$.

(2) Let $\chi(x,y) = \chi(y,z) = 1$. If $\chi(x,z) \prec 1$, Lemma~\ref{lem:pump-pump} yields that $G$ then cannot be finite. Contradiction.

(3) Let $S$ be a maximal set of vertices of $G$ such that $x \in S$ and $\chi(u, v) = 1$ for all $u, v \in S$ with $u \ne v$. Note that $|S| \ge 2$ due to~(1). Let us show that $S$ coincides with the connected component $W$ of $G$ that contains~$x$. Suppose to the contrary that this is not the case and take any $z \in W \setminus S$
such that $\chi(z, y) \succ 0$ for some $y \in S$. Without loss of generality we may assume that $y \ne x$
(because $|S| \ge 2$). Note also that $\chi(x, z) \ne 1$ and $\chi(y, z) \ne 1$. Then Lemma~\ref{lem:pump-pump} yields that $G$ is not finite. Contradiction.
\end{proof}

\begin{prop}
Let $G$ be a finite $MH$-homogeneous vertex-uniform $L$-colored graph where every vertex has color $\alpha \in L$. Assume that there exist $x_0, y_0 \in V(G)$ such that $\chi(x_0, y_0) = 1$. Then there exists a positive integer $n$ such that every connected component of $G$ is isomorphic to~$U(n, \alpha, 1)$.
\end{prop}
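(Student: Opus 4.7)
The plan is to use Lemma~\ref{prop:bigraphswithredblue} to show that each connected component is already a ``$1$-clique,'' and then imitate the double-counting argument from the chain case to conclude that all components have the same size.

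First I would observe that by Lemma~\ref{prop:bigraphswithredblue}(3), whenever two vertices $x, y$ lie in the same connected component of $G$ we have $\chi(x, y) = 1$. Combined with vertex-uniformity (every vertex has color $\alpha$), this immediately gives that each connected component $S$ is edge-complete with all edges of color~$1$ and all vertices of color~$\alpha$; equivalently, $G[S] \cong U(|S|, \alpha, 1)$. Also, by Lemma~\ref{prop:bigraphswithredblue}(1), every vertex is incident to some $1$-edge, so each component has size at least~$2$.

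The remaining content is to show that all components have the same number of vertices. Let $S_1$ and $S_2$ be two connected components of $G$, say with $G[S_i] \cong U(n_i, \alpha, 1)$. Pick any $x \in S_1$ and $y \in S_2$. Since $\chi(x) = \chi(y) = \alpha$, the single-point map $\begin{pmatrix} x \\ y \end{pmatrix}$ is a monomorphism from $G[x]$ to $G[y]$, and by $MH$-homogeneity it extends to an endomorphism $f^*$ of $G$. Because an endomorphism sends edges ($\chi > 0$) to edges, it maps each connected component of $G$ into a single connected component; since $f^*(x) = y \in S_2$, we get $f^*(S_1) \subseteq S_2$. Moreover $f^*|_{S_1}$ is injective: if $u, v \in S_1$ were distinct with $f^*(u) = f^*(v)$, then $0 = \chi(f^*(u), f^*(v)) \succeq \chi(u, v) = 1$, a contradiction. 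Hence $n_1 = |S_1| \le |S_2| = n_2$. Swapping the roles of $S_1$ and $S_2$ gives the reverse inequality, so $n_1 = n_2$.

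Setting $n$ to be this common value, every connected component is isomorphic to $U(n, \alpha, 1)$, as required. I do not expect any genuine obstacle here: once Lemma~\ref{prop:bigraphswithredblue} has reduced each component to a monochromatic $1$-clique, the equal-size argument is just the standard ``extend a vertex-to-vertex monomorphism and use injectivity on edges'' trick already used in the proof of the chain classification theorem.
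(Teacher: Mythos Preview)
Your proof is correct. The paper actually states this proposition without proof, treating it as an immediate consequence of Lemma~\ref{prop:bigraphswithredblue}; your argument is precisely the intended one, combining part~(3) of that lemma (each component is a $1$-clique) with the same ``extend a one-vertex monomorphism and use edge-injectivity'' step used in the $(2)\Rightarrow(3)$ direction of the chain classification theorem to force all components to have equal size.
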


Next, we consider finite $MH$-homogeneous vertex-uniform $L$-colored graphs satisfying $\chi(x, y) = 1$ for no $x, y \in V(G)$.

\begin{prop}\label{prop:weakconnotweakcom}
Let $G$ be a finite connected $MH$-homogeneous vertex-uniform $L$-colored graph such that $\chi(x, y) = 1$ for no $x, y \in V(G)$. Then $G$ is complete.
\end{prop}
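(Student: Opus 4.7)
The plan is to argue by contradiction: suppose some pair of distinct vertices has edge color $0$, and use Lemma~\ref{lem:pump-pump} to force $G$ to be infinite. The key observation is that vertex-uniformity trivializes the vertex-color half of condition (ii) of Lemma~\ref{lem:pump-pump}, so the only thing I need to arrange is the edge-color configuration.

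The first step is to extract, from the assumed non-completeness, three vertices in the ``pump-pump'' configuration. Suppose $G$ is not complete, so there exist distinct $u, w \in V(G)$ with $\chi(u, w) = 0$. By connectedness there is a path $u = v_0, v_1, \ldots, v_d = w$ along edges of non-zero color; choose one of minimum length. Since $\chi(u, w) = 0$ we have $d \ge 2$. Consider the initial triple $v_0, v_1, v_2$: both $\chi(v_0, v_1)$ and $\chi(v_1, v_2)$ are $\succ 0$ by the path condition, and $\chi(v_0, v_2) = 0$, since otherwise either $d = 2$ and this contradicts $\chi(u, w) = 0$, or $d \ge 3$ and the path $v_0, v_2, v_3, \ldots, v_d$ would be strictly shorter, contradicting minimality.

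The second step is to apply Lemma~\ref{lem:pump-pump} with $a_0 = v_0$, $a_1 = v_1$, $x = v_2$. Condition~(i) follows directly from $\chi(v_0, v_1), \chi(v_1, v_2) \succ 0$. For (ii), the edge part $\chi(a_0, x) = 0 \preceq \chi(a_0, a_1)$ is immediate since $0$ is the least element of $L$, while the vertex part $\chi(x) \preceq \chi(a_1)$ is trivial because $G$ is vertex-uniform with every vertex of color $\alpha$. For (iii), $\chi(a_0, x) = 0 \prec \chi(a_0, a_1)$ since $\chi(v_0, v_1) \succ 0$. Lemma~\ref{lem:pump-pump} then forces $G$ to be infinite, contradicting the hypothesis.

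I do not anticipate a substantive obstacle: the proof is essentially a one-shot application of the pumping lemma, and the entire content is the reduction of ``not complete but connected'' to a pump-pump triple via a shortest-path argument. I note in passing that the hypothesis ``$\chi(x, y) = 1$ for no $x, y$'' is not actually used in the argument above; it serves only to delimit this proposition from the preceding one, which handles the complementary case and provides, in tandem with this result, the full description of finite connected vertex-uniform $MH$-homogeneous $L$-colored graphs over a diamond.
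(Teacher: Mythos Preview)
Your proof is correct and follows essentially the same approach as the paper: both take a shortest path between two non-adjacent vertices and feed a suitable triple into Lemma~\ref{lem:pump-pump}. Your version is in fact a bit more direct---the paper first uses $MH$-homogeneity to manufacture a common neighbour $z$ of the two endpoints $v_1$ and $v_k$ and then applies the lemma to $v_1, z, v_k$, whereas you simply apply the lemma to the first three vertices $v_0, v_1, v_2$ of the path; your observation that the hypothesis ``$\chi(x,y)=1$ for no $x,y$'' is not used in the argument is also correct.
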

\begin{proof}
Assume, to the contrary, that $G$ is not complete. Then there exist $x, y \in V(G)$ such that $x \ne y$ and $\chi(x, y) = 0$. Since $G$ is connected, there exists a sequence $v_1, v_2, \ldots, v_k$ of vertices of $G$ such that $x = v_1$, $y = v_k$ and $\chi(v_i, v_{i+1}) \succ 0$ for all $i \in \{1, \ldots, k-1\}$. Without loss of generality, we can assume that $(v_1, v_2, \ldots, v_k)$ is the shortest such sequence, so that $\chi(v_i, v_j) = 0$ whenever $j - i > 1$. Note that $k \ge 3$ beacuse $\chi(x, y) = 0$. Now, $f = \begin{pmatrix} v_1 & v_3 \\ v_1 & v_k \end{pmatrix}$ is a partial monomorphism which, by the homogeneity assumption, extends to an endomorphism $f^*$ of $G$. Let $z = f^*(v_2)$. Note that $\chi(x, z) \succ 0$ and $\chi(y, z) \succ 0$.
Therefore, $x$, $y$ and $z$ provide a configuration which, by Lemma~\ref{lem:pump-pump}, ensures that $G$ is not finite. Contradiction.
\end{proof}

If $G$ is a finite vertex-uniform $L$-colored graph which is connected and complete, all endomorphisms are automorphisms, and it is easy to see that $G$ is $HH$-homogeneous if and only if $G$ is $MH$-homogeneous if and only if $G$ is ultrahomogeneous.
On the other hand, if $G$ is a finite vertex-uniform $L$-colored graph wich is not connected and has the property that $\chi(x, y) \prec 1$ for all $x, y \in V(G)$, then by Proposition~\ref{prop:weakconnotweakcom} every connected component of $G$ is complete and all components have to be be isomorphic. So, we have the following partial classification result
which depends on the classification of all finite ultrahomogeneous edge colored graphs (and this is a long-standing open problem):

\begin{thm}\label{thm:general-case}
Let $L$ be a diamond with the least element~0 and the greatest element~1. The following are equivalent for a finite vertex-uniform $L$-colored graph $G$ where every vertex is colored by $\alpha \in L$:
\begin{itemize}
\item[(1)] $G$ is $HH$-homogeneous,
\item[(2)] $G$ is $MH$-homogeneous,
\item[(3)] $G$ is a disjoint union of $k \ge 1$ copies of $H$, where
\begin{itemize}
\item $H$ is $U(n, \alpha, 1)$ for some positive integer $n$; or
\item $H$ is an ultrahomogeneous $L$-colored graph such that $0 \prec \chi(x, y) \prec 1$ for all $x, y \in V(G)$ such that $x \ne y$, and $\chi(x) = \alpha$ for all $x \in V(G)$.
\end{itemize}
\end{itemize}
\end{thm}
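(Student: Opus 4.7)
The plan is to prove the cycle $(3) \Rightarrow (1) \Rightarrow (2) \Rightarrow (3)$. The implication $(1) \Rightarrow (2)$ is immediate since every monomorphism is a homomorphism, so only $(3) \Rightarrow (1)$ and $(2) \Rightarrow (3)$ require work.

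For $(3) \Rightarrow (1)$, let $G$ be a disjoint union of copies of $H$ and $f : S \to T$ a homomorphism between finite induced substructures. In both variants of $H$ every component of $G$ is complete, so the restriction of $f$ to $C \cap S$, for any component $C$ of $G$, is automatically injective (it cannot collapse an edge of color $\succ 0$) and lands in a single component of $G$. When $H \cong U(n, \alpha, 1)$, this restriction extends to an isomorphism of components by the obvious ultrahomogeneity of $U(n, \alpha, 1)$. When $H$ is ultrahomogeneous with all edges of middle color, the pairwise incomparability of the middle elements of the diamond forces every homomorphism between finite substructures of $H$ to be an isomorphism onto its image, and ultrahomogeneity of $H$ then extends it to an automorphism of the target component. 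Components of $G$ disjoint from $S$ are mapped by the identity.

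For $(2) \Rightarrow (3)$, assume $G$ is finite, vertex-uniform, and $MH$-homogeneous, and split on whether some pair of vertices has color $1$. If yes, the Proposition immediately preceding the theorem yields that every connected component of $G$ is isomorphic to a common $U(n, \alpha, 1)$, giving the first variant of~(3). Otherwise $\chi(x, y) \prec 1$ for all $x, y$, and Proposition~\ref{prop:weakconnotweakcom} makes each connected component of $G$ complete. For components $S_1, S_2$, pick $x \in S_1$ and $y \in S_2$ and extend the monomorphism $x \mapsto y$ to an endomorphism of $G$; completeness of $S_1$ renders this extension injective on $S_1$ and, since endomorphisms take components into components, its image lies in $S_2$, giving $|S_1| \le |S_2|$. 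Symmetry yields $|S_1| = |S_2|$, so every restriction of an endomorphism of $G$ between two components is a bijection.

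The key remaining step is to promote such a bijection to an isomorphism. I would prove that any bijective endomorphism $\phi$ of a finite vertex-uniform $L$-colored graph (with $L$ a diamond) is an automorphism by a multiset-counting argument on edge colors: the induced permutation of unordered pairs preserves the multiset $\{\chi(u,v) : u \ne v\}$, so pairs of color $1$ must map to pairs of color $1$, and the pairwise incomparability of the middle diamond elements then forces pairs of color $a_i$ to map to pairs of color $a_i$, and $0$-pairs to $0$-pairs by elimination. Applied to the extension of $x \mapsto y$ this yields $S_1 \cong S_2$; applied to arbitrary partial monomorphisms within a single component $S$ it yields ultrahomogeneity of~$S$. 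Combined, these give the second variant of~(3). The main obstacle, and the only genuinely new ingredient, is this \emph{bijective-endomorphism-is-automorphism} lemma, for which the diamond structure of $L$ is essential.
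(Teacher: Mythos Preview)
Your approach is the same as the paper's, which presents this theorem as a summary of the preceding discussion: the key observation is that in a finite complete vertex-uniform $L$-colored graph every endomorphism is an automorphism, from which the equivalence of $MH$, $HH$ and ultrahomogeneity for connected components follows. Your multiset-counting proof of this lemma is correct and fills in what the paper merely asserts.

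There is one imprecision you should patch. The lemma you state and prove is that a bijective \emph{endo}morphism of a single graph is an automorphism; the counting argument relies on the domain and codomain having the same multiset of edge colors, which is automatic only when they coincide. When you write ``Applied to the extension of $x \mapsto y$ this yields $S_1 \cong S_2$'' you are applying it to a bijective homomorphism $\phi : S_1 \to S_2$ between two a priori different components, where the counting does not directly go through. The fix is immediate once you use the symmetry you already invoked for the cardinality: extend $y \mapsto x$ as well to obtain a bijective homomorphism $\psi : S_2 \to S_1$, note that $\psi \circ \phi$ is now a bijective endomorphism of $S_1$ and hence an automorphism by your lemma, and conclude
\[
\chi(u,v) \preceq \chi(\phi(u),\phi(v)) \preceq \chi(\psi\phi(u),\psi\phi(v)) = \chi(u,v)
\]
for all $u,v \in S_1$, so $\phi$ is an isomorphism. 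With this small addition your argument is complete and matches the paper's outline.
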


However, if $L = M_2$ is the diamond on four elements $0, b, r, 1$ where $0 \prec b \prec 1$, $0 \prec r \prec 1$ and where $b$ and $r$ are incomparable ($b$ and $r$ stand for \emph{blue} and \emph{red}, respectively), we can provide the full classification as follows. For an $\alpha \in M_2$ let $G_{(\alpha)} = (V, E_\alpha)$ be the (ordinary undirected) graph where $E_\alpha = \{ \{x, y\} : \chi(x, y) = \alpha \}$.

\begin{thm}\label{thm:bicolored-case}
The following are equivalent for a finite vertex-uniform $M_2$-colored graph $G$ where every vertex is colored by $\alpha \in M_2$:
\begin{itemize}
\item[(1)] $G$ is $HH$-homogeneous,
\item[(2)] $G$ is $MH$-homogeneous,
\item[(3)] $G$ is a disjoint union of $k \ge 1$ copies of $H$, where
\begin{itemize}
\item $H$ is $U(n, \alpha, 1)$ for some positive integer $n$; or
\item $H$ is vertex uniform, $H_{(r)}$ is one of the Gardiner graphs and $H_{(b)}$ is its complement.
\end{itemize}
\end{itemize}
\end{thm}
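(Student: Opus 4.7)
The plan is to reduce the statement to Theorem~\ref{thm:general-case} and to finish with Gardiner's classification of finite ultrahomogeneous undirected graphs. The implication $(1) \Rightarrow (2)$ is immediate. For $(3) \Rightarrow (1)$ I would dispatch the two cases of the description separately: disjoint unions of copies of $U(n,\alpha,1)$ are already covered by Theorem~\ref{thm:general-case}, while in the second case I would observe that, since every pair of distinct vertices of $H$ receives a color in $\{b,r\}$, the graphs $H_{(b)}$ and $H_{(r)}$ are complementary on $V(H)$, so any $M_2$-colored isomorphism between induced substructures of $H$ is exactly an isomorphism between the corresponding induced subgraphs of $H_{(r)}$. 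Hence ultrahomogeneity of $H_{(r)}$ (which is a Gardiner graph) lifts to ultrahomogeneity of $H$, and Theorem~\ref{thm:general-case} again applies.

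The nontrivial direction is $(2) \Rightarrow (3)$. By Theorem~\ref{thm:general-case}, $G$ is the disjoint union of $k \ge 1$ isomorphic copies of a graph $H$ that either equals $U(n,\alpha,1)$ (in which case we are done) or is vertex-uniform, ultrahomogeneous, and satisfies $0 \prec \chi(x,y) \prec 1$ for every pair of distinct vertices $x,y \in V(H)$. The key observation for the second case is that in $M_2$ the set of elements strictly between $0$ and $1$ is exactly $\{b,r\}$, and these two elements are incomparable. Hence $\chi(x,y) \in \{b,r\}$ for every distinct pair, which means that $H_{(b)}$ and $H_{(r)}$ partition the edge set of the complete graph on $V(H)$, i.e. $H_{(b)}$ is the complement of $H_{(r)}$. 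Using the same correspondence as in the $(3) \Rightarrow (1)$ direction, ultrahomogeneity of $H$ as an $M_2$-colored graph is equivalent to ultrahomogeneity of $H_{(r)}$ as an ordinary undirected graph, so Gardiner's theorem forces $H_{(r)}$ to be one of the four types in his list.

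All the heavy combinatorial work (the pumping argument of Lemma~\ref{lem:pump-pump} and the structural analysis culminating in Theorem~\ref{thm:general-case}) has already been carried out, so no deep new argument is required; the proof is essentially the corollary of the general case obtained by exploiting that $M_2$ has only two middle elements. The main point to verify carefully is the bijective correspondence between $M_2$-colored isomorphisms of substructures of $H$ and ordinary isomorphisms of the corresponding substructures of $H_{(r)}$, which hinges on $b$ and $r$ being incomparable in $M_2$ (so that an $M_2$-colored isomorphism cannot swap a $b$-edge with an $r$-edge).
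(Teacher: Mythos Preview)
Your approach is correct and is precisely the specialization the paper has in mind: the paper states Theorem~\ref{thm:bicolored-case} without a separate proof, presenting it as the instance of Theorem~\ref{thm:general-case} in which the ultrahomogeneous alternative can be resolved via Gardiner's classification because the middle layer of $M_2$ consists only of the two incomparable elements $b$ and $r$. Your write-up makes explicit exactly this reduction, including the key observation that incomparability of $b$ and $r$ forces $M_2$-colored isomorphisms of substructures of $H$ to coincide with ordinary graph isomorphisms of the corresponding substructures of $H_{(r)}$.
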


As the example below shows, Theorems~\ref{thm:general-case} and~\ref{thm:bicolored-case} cannot be extended to finite $L$-colored graphs where $L$ is a diamond and graphs are not required to be vertex-uniform.

\paragraph{Example 1}
Let $G$ be an $M_2$-colored graph on four vertices $a, b, c, d$ where the vertices and the edges are colored as follows: $\chi(a) = \chi(b) = r$, $\chi(c) = \chi(d) = b$, $\chi(a, c) = \chi(c, d) = \chi(b, d) = r$, $\chi(a, d) = \chi(b, c) = b$ and $\chi(a, b) = 0$ (see Figure~\ref{fig:hh_neq_mh_example1}). 

\begin{figure}[ht]
\begin{center}
\includegraphics[scale=1.0]{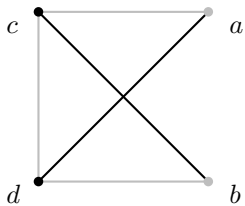}
\caption{An example of a finite $L$-colored graph that is $MH$-homogeneous but not $HH$-homogeneous.}
\label{fig:hh_neq_mh_example1}
\end{center}
\end{figure}

Then $G$ is clearly an $MH$-homogeneous graph. To see that $G$ is not an $HH$-homogeneous graph it suffices to note that the partial homomorphism $f = \begin{pmatrix} a & b \\ a & a \end{pmatrix}$ cannot be extended to an endomorphism of~$G$.

\section{Concluding remarks}
\label{s:mor-ext-classes}

A simple relational structure presented in Example 1 can easily be generalised to provide a whole class of structures that are all $MH$-homogeneous but not $HH$-homogeneous.

\begin{figure}[ht]
\begin{center}
\includegraphics[scale=1.0]{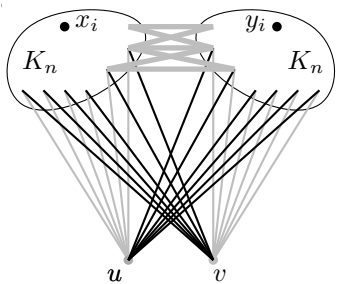}
\caption{A class of finite $L$-colored graphs which are all $MH$- but not $HH$-homogeneous.}
\label{fig:hh_neq_mh_example2}
\end{center}
\end{figure}

The construction is depicted in Figure~\ref{fig:hh_neq_mh_example2}. Fix $n \in \{1,2, \ldots , \omega\}$. (Note that in case $n = \omega$ we get an example of a countably infinite structure that is $MH$-homogeneous but not $HH$-homogeneous.)  Take two cliques both of size $n$ whose vertices and edges are colored black. Join the vertices of these two cliques by fat gray edges. Finally, add two new nonadjacent vertices $u$ and $v$ colored gray, and join the two vertices and the vertices of the two cliques by black and gray edges as in Figure~\ref{fig:hh_neq_mh_example2}. Then, as in Example 1, we can show that this graph is $MH$-homogeneous but not $HH$-homogeneous.

\begin{figure}[h!]
\begin{center}
\includegraphics[scale=1.0]{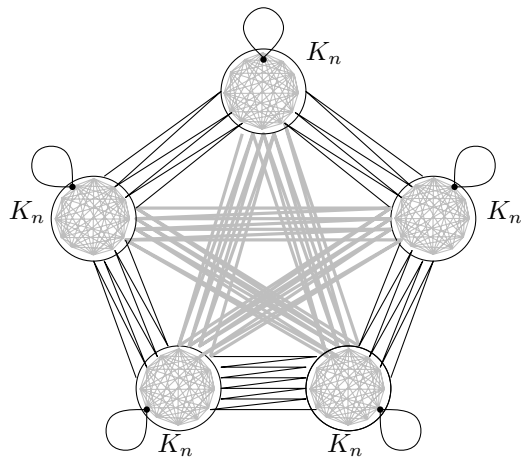}
\caption{An edge-colored graph with loops that is $MH$- but not $HH$-homogeneous.}
\label{fig:hh_neq_mh_example3}
\end{center}
\end{figure}

A question that arises immediately is whether one can avoid the need for colored vertices at the expense of introducing loops.

Consider the finite edge-colored graph depicted in Figure~\ref{fig:hh_neq_mh_example3} with no colors assigned to vertices that we construct as follows. Given $n>1$, take five copies of $K_n$ and color their edges gray. Now join these cliques by complete bipartite graphs using two mutually disjoint 5-cycles where the edges of one 5-cycle are black, while the edges of the other 5-cycle are gray. Furthermore, add a black loop to each vertex.

This graph is easily seen to be $MH$-homogeneous. To see that it is not $HH$-homogeneous, consider a partial homomorphism unifying two neighboring cliques (this is possible due to black-colored loops). Then every endomorphism that extends such a partial homomorphism would enforce the existence of an edge colored both black and gray.

\begin{figure}[ht]
\begin{center}
\includegraphics[scale=1.0]{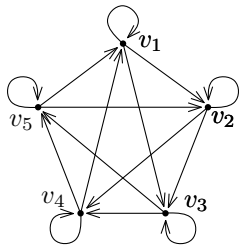}
\caption{A digraph with loops that is $MH$-homogeneous but not $HH$-homogeneous.}
\label{fig:hh_neq_mh_example4}
\end{center}
\end{figure}

Finally in Figure~\ref{fig:hh_neq_mh_example4} we present a directed graph with loops that is $MH$-homogeneous but not $HH$-homogeneous. To see that this digraph is not $HH$-homogeneous consider a partial homomorphism $f = \begin{pmatrix}
  v_1 & v_4 & v_5\\
  v_1 & v_5 & v_5
\end{pmatrix}$. Then every endomorphism that extends $f$ would enforce the existence of a bidirectional edge.

We close the paper with several open problems whose solutions would be helpfull in understanding the structure of homogeneous $L$-colored graphs with respect to various types of homogeneity discussed in this paper. Let $L$ be an arbitrary partially ordered set.

\paragraph{Problem 1}
Classify all finite $HH$-homogeneous and $MH$-homogeneous $L$-colored graphs.

\paragraph{Problem 2}
Do classes $MH$ and $HH$ coincide for finite vertex-uniform $L$-colored graphs?

\paragraph{Problem 3}
Do classes $MH$ and $HH$ coincide for countable vertex-uniform $L$-colored graphs?

\section*{Acknowledgement}
This work was supported by the Czech Science Foundation project Centre of excellence---Institute for Theoretical Computer Science (CE-ITI) number P202/12/G061, and by the Grant No.\ 174019 of the Ministry of Education and Science of the Republic of Serbia.
 
\bibliographystyle{unsrt}
\bibliography{homhom-L-col-graphs}

\end{document}